\numberwithin{equation}{section}
\newtheorem{thm}{Theorem}[section]
\newtheorem{cor}[thm]{Corollary}
\newtheorem{definition}[thm]{Definition}
\newtheorem{remark}[thm]{Remark}
\title{Local large deviation principle for Smale spaces}
\author{David Parmenter}
\date{September 2025}
\begin{document}

\maketitle

\abstract{Large deviation principles for hyperbolic systems are well studied and provide exponential rates for the deviations of Birkhoff averages from their limit. This short article presents a local large deviation principle for Smale spaces, in particular studying the rate functions of deviations with respect to conditional Gibbs measures supported on local unstable manifolds. The proof builds on a result due to Kifer and pressure growth estimates due to Parmenter and Pollicott.}

\section{Introduction and main results}

Consider the compact metric space $X$ and homeomorphism $f : X \rightarrow X$. We call the system a Smale space if there is a local product structure given by a bracket map $[\cdot,\cdot]$ which is analogous to the Bowen bracket for Axiom A diffeomorphisms. In this sense Smale spaces generalise the locally maximal uniformly hyperbolic diffeomorphisms. Smale spaces also include subshifts of finite type and therefore provide a unified setting to study both classes of systems.

Recall a result due to Ruelle \cite{ruelle} that the SRB measure $\mu_{SRB}$ for uniformly hyperbolic attractors determines the Birkhoff average of almost every point with respect to the volume $m$. 
In particular,  
for every continuous observable $\phi : X \rightarrow \mathbb{R}$ and $m$-a.e. $x$,
    \begin{equation*}
        \frac{1}{n} S_n \phi(x) = \frac{1}{n} \sum_{i=0}^{n-1} \phi(f^i x) \rightarrow \int_X \phi d\mu_{SRB}.
    \end{equation*}


Our motivation is in the study of the rate of exponential decay of the measure of the set below,
\begin{equation*} \label{set}
   \lambda \bigg( \bigg\{y : \frac{1}{n}S_n\phi(y) \in L\bigg\} \bigg) \text{\hspace{5mm} as $n \rightarrow \infty$},
\end{equation*}
where $L \subset \mathbb{R}$ is a Borel set and $\lambda$ is an appropriately chosen measure. 

Before stating the main result we define an orbital measure.
\begin{definition} 
For $y \in X$ and $n \in \mathbb{N}$ the $n$-th orbital measure $\zeta_y^n$ is defined by,
\begin{equation*}
\zeta_y^n = \frac{1}{n} \sum_{i=0}^{n-1} \delta_{f^iy}.
\end{equation*}
\end{definition}
Let $\mathcal{M}(X)$ be the set of probability measures on $X$ and $\mathcal{M}_f(X)$ be the $f$-invariant probability measures on $X$. For some H\"older continuous $G : X \rightarrow \mathbb{R}$, denote by $\mu_{x,G}^u$ the conditional Gibbs measure (Definition \ref{condgibbs}) supported on a piece of local unstable manifold $W_\delta^u(x)$ (Definition \ref{def:unstable}) centred at $x$. Heuristically a conditional Gibbs measure has the Gibbs property for unstable Bowen balls. Then we have the following local large deviation principle.

\begin{thm} \label{LLDP}
Let $(X, d, f, [\cdot , \cdot])$ be a mixing Smale space and $G : X \rightarrow \mathbb{R}$ a H\"older continuous potential. For $\mu_{G}$ a.e. $x \in X$, $\delta > 0$ sufficiently small and any closed $K \subset \mathcal{M}(X)$,
\begin{equation*}
\limsup_{n \rightarrow \infty} \frac{1}{n} \log \mu_{x,G}^u(\{y : \zeta_y^n \in K\}) \leq - \inf \{I(\nu) : \nu \in K\},
\end{equation*}
where,
\begin{equation} \label{rate func}
I(\nu) = 
    \begin{cases}
      P(G) - \int G d\nu - h_\nu(f) 			& \text{$\nu \in \mathcal{M}_f(X)$}\\
      \infty								 	& \text{$\nu \notin \mathcal{M}_f(X)$}.
    \end{cases} 
\end{equation}
Additionally, for any open $J \subset \mathcal{M}(X)$,
\begin{equation*}
\liminf_{n \rightarrow \infty} \frac{1}{n} \log \mu_{x,G}^u(\{y : \zeta_y^n \in J\}) \geq - \inf\{I(\nu) : \nu \in J\}.
\end{equation*}
\end{thm}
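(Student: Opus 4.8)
The plan is to derive the principle from Kifer's general large deviation theorem, whose essential hypothesis is the existence of the logarithmic moment generating limit
\[
Q(\phi) := \lim_{n\to\infty}\frac1n\log\int_{W^u_\delta(x)} e^{S_n\phi(y)}\,d\mu^u_{x,G}(y),\qquad \phi\in C(X),
\]
(noting $S_n\phi(y)=n\langle\zeta^n_y,\phi\rangle$), together with the existence of equilibrium states realising the supremum in the dual formula for $Q$. Once $Q$ is identified, Kifer's theorem delivers the upper bound for closed sets and the lower bound for open sets simultaneously, with rate function the Legendre--Fenchel transform $I=Q^*$.

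First I would identify $Q$. The content here, and the heart of the paper, is that although $\mu^u_{x,G}$ lives on a single local unstable leaf through a $\mu_G$-generic base point $x$, the weighted sums it produces still see the whole space. Covering $W^u_\delta(x)$ by $(n,\epsilon)$-unstable Bowen balls and using the conditional Gibbs property to replace the mass of the ball around $y$ by $\exp(S_nG(y)-nP(G))$, the integral becomes comparable to a pressure sum $\sum_y \exp\big(S_n(G+\phi)(y)-nP(G)\big)$ over an $(n,\epsilon)$-separated subset of the leaf. The pressure growth estimates of Parmenter and Pollicott, which for a mixing Smale space and $\mu_G$-a.e.\ $x$ guarantee that such leafwise sums recover the full topological pressure, then give $Q(\phi)=P(G+\phi)-P(G)$. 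Taking $\phi=0$ fixes the normalisation constant as $P(G)$, consistent with $\mu^u_{x,G}$ being a probability measure.

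Next I would compute the rate function as the Legendre transform. Using the variational principle for pressure and the standard convex duality between the pressure functional $\psi\mapsto P(G+\psi)$ on $C(X)$ and the affine energy functional on $\mathcal M(X)$,
\[
I(\nu)=Q^*(\nu)=\sup_{\phi\in C(X)}\Big(\int\phi\,d\nu-P(G+\phi)\Big)+P(G)
=P(G)-\int G\,d\nu-h_\nu(f)
\]
for $\nu\in\mathcal M_f(X)$, the supremum being $+\infty$ for non-invariant $\nu$; this is exactly \eqref{rate func}. For the lower bound I would invoke that every H\"older potential $G+\phi$ on a mixing Smale space admits a unique equilibrium state, which realises the supremum defining $Q(\phi)$ and supplies the measure at which Kifer's lower bound concentrates.

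The main obstacle is the identification of $Q$: making rigorous that the leafwise generating functional recovers $P(G+\phi)$ for $\mu_G$-a.e.\ $x$. Two points need care. First, the Parmenter--Pollicott estimates must be applied uniformly enough to control both the exponential growth and the error terms arising from the bounded distortion of $S_n\phi$ across unstable Bowen balls. Second, for a fixed generic $x$ the limit is a priori only available off a $\phi$-dependent null set; since the finite-$n$ functionals $\phi\mapsto \frac1n\log\int e^{S_n\phi}\,d\mu^u_{x,G}$ are $1$-Lipschitz in the supremum norm and $Q$ is continuous, I would establish the limit first on a countable dense subset of $C(X)$ and then extend to all $\phi$ by equicontinuity, thereby obtaining a single full-measure set of admissible base points valid for every continuous observable.
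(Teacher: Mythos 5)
Your proposal follows essentially the same route as the paper: identify $Q(\varphi)=P(G+\varphi)-P(G)$ via the Parmenter--Pollicott leafwise pressure growth estimate (Lemma 4.1 of their Smale space paper), recognise $I$ as the Legendre--Fenchel conjugate of $Q$ using the variational principle together with upper semi-continuity and affineness of entropy, and then apply Kifer's Theorem 2.1 for both the upper and lower bounds. Your additional remarks on obtaining a single full-measure set of base points valid for all $\varphi$ via a countable dense subset and Lipschitz continuity are a sensible refinement of a point the paper leaves implicit, but they do not change the argument's structure.
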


It is a classical result that the above holds when $\mu_{x,G}^u$ is replaced by the Gibbs measure $\mu_G$, see for example Kifer \cite{kifer1990large}. Theorem \ref{LLDP} is local in the sense that not only does the large deviation principle hold globally but we show that it also holds with the same rate function when we restrict to almost every unstable leaf.


In addition we also deduce the following corollary. Let $\phi \in C(X)$ and define $\tilde{\phi} : \mathcal{M}(X) \rightarrow \mathbb{R}$ by $\tilde{\phi}(\mu) = \int \phi d \mu$ so that $\tilde{\phi}(\zeta_y^n) = S_n \phi(y)$. It is easily seen from Theorem \ref{LLDP} that the following type two large deviation principle holds.

\begin{cor}
Let $(X, d, f, [\cdot , \cdot])$ be a mixing Smale space, $\phi : X \rightarrow \mathbb{R}$ be continuous and $G:X \rightarrow \mathbb{R}$ be H\"older. For $\mu_{G}$ a.e. $x \in X$, $\delta > 0$ sufficiently small and any closed set $L \subset \mathbb{R}$,
\begin{equation*}
\limsup_{n \rightarrow \infty} \frac{1}{n} \log \mu_{x,G}^u\bigg(\bigg\{y : \frac{1}{n}S_n\phi(y) \in L\bigg\}\bigg) \leq - \inf \{\tilde{I}(L) : \alpha \in L\},
\end{equation*}
and for any open set $U \subset \mathbb{R}$,
\begin{equation*}
\liminf_{n \rightarrow \infty} \frac{1}{n} \log \mu_{x,G}^u\bigg(\bigg\{y : \frac{1}{n}S_n\phi(y) \in U\bigg\}\bigg) \geq - \inf \{\tilde{I}(U) : \alpha \in U\},
\end{equation*}
where
\begin{equation*}
    \tilde{I}(\alpha) =  \inf_{\nu}   
    \begin{cases}
      P(G) - \int G d\nu - h_\nu(f) 			& \text{if $\nu \in \mathcal{M}_f(X)$ and $\int \phi d\nu = \alpha$,}\\
      \infty								 	& \text{\text{otherwise}}.
    \end{cases} 
\end{equation*}
\end{cor}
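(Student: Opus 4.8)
The plan is to realise this corollary as a direct instance of the contraction principle, transporting the large deviation bounds of Theorem \ref{LLDP} on $\mathcal{M}(X)$ down to $\mathbb{R}$ through the evaluation map $\tilde{\phi}$. Fix $x$ and $\delta$ as in Theorem \ref{LLDP}, so that all the quantifiers pass through unchanged, and work with the corresponding conditional Gibbs measure $\mu_{x,G}^u$. The first observation I would record is the set identity
\begin{equation*}
\left\{y : \tfrac{1}{n}S_n\phi(y) \in L\right\} = \left\{y : \zeta_y^n \in \tilde{\phi}^{-1}(L)\right\},
\end{equation*}
which is immediate from $\tilde{\phi}(\zeta_y^n) = \int \phi\, d\zeta_y^n = \tfrac{1}{n}S_n\phi(y)$. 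The key structural fact is that $\tilde{\phi} : \mathcal{M}(X) \to \mathbb{R}$ is continuous for the weak-$*$ topology: if $\nu_k \to \nu$ weak-$*$ then $\int \phi\, d\nu_k \to \int \phi\, d\nu$ since $\phi \in C(X)$. Consequently $\tilde{\phi}^{-1}(L)$ is closed whenever $L$ is closed, and $\tilde{\phi}^{-1}(U)$ is open whenever $U$ is open.

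With these preliminaries in hand, I would prove the upper bound by applying the closed-set half of Theorem \ref{LLDP} to $K := \tilde{\phi}^{-1}(L)$, which gives
\begin{equation*}
\limsup_{n \to \infty} \frac{1}{n} \log \mu_{x,G}^u\!\left(\left\{y : \tfrac{1}{n}S_n\phi(y)\in L\right\}\right) \leq -\inf\{I(\nu) : \nu \in \tilde{\phi}^{-1}(L)\},
\end{equation*}
and symmetrically apply the open-set half to $J := \tilde{\phi}^{-1}(U)$ to obtain the lower bound with $-\inf\{I(\nu) : \nu \in \tilde{\phi}^{-1}(U)\}$ on the right. It then remains only to match these infima of $I$ over preimages with the claimed scalar rate function $\tilde{I}$.

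The final step is the elementary identification of infima. Partitioning the preimage by the value $\alpha = \int \phi\, d\nu$ and using that the infimum over a union is the infimum of the infima gives
\begin{equation*}
\inf\{I(\nu) : \nu \in \tilde{\phi}^{-1}(L)\} = \inf_{\alpha \in L}\; \inf\left\{I(\nu) : \int \phi\, d\nu = \alpha\right\} = \inf_{\alpha \in L} \tilde{I}(\alpha),
\end{equation*}
where the inner infimum is exactly the definition of $\tilde{I}(\alpha)$; here non-invariant $\nu$ contribute $I(\nu) = \infty$ and are harmless, matching the ``otherwise'' branch in the definition of $\tilde{I}$. The identical computation holds verbatim with $U$ in place of $L$. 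Substituting these two identities into the displayed bounds yields precisely the stated type-two large deviation principle. I do not expect a genuine obstacle here, since all the real content sits in Theorem \ref{LLDP}; the only points requiring care are confirming the weak-$*$ continuity of $\tilde{\phi}$ (so that closed and open sets pull back to closed and open sets) and justifying the interchange of infima, both of which are routine.
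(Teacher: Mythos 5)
Your proposal is correct and is exactly the argument the paper intends: the paper introduces $\tilde{\phi}(\mu)=\int\phi\,d\mu$ with $\tilde{\phi}(\zeta_y^n)=\frac{1}{n}S_n\phi(y)$ precisely so that the corollary follows from Theorem \ref{LLDP} by this contraction-principle step (continuity of $\tilde{\phi}$ in the weak-$*$ topology, pulling back closed and open sets, and identifying the infima), and it leaves the details as ``easily seen.'' Your write-up fills in those routine details faithfully; no gap.
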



In fact we remark in \textsection 3 that Theorem \ref{LLDP} also holds for the class of partially hyperbolic diffeomorphisms considered in \cite{parmenter2022constructing} and for the continuous time setting of hyperbolic flows.



 \section{Smale spaces}
  
Heuristically Smale spaces are compact metric spaces which satisfy a local product structure by stable/unstable manifolds determined by some homeomorphism. Let $X$ be a non-empty compact metric space with metric $d$. Assume there is an $\epsilon > 0$ and a map, $[\cdot , \cdot]$ with the following properties:
\begin{equation*}
[\cdot , \cdot] : \{ (x,y) \in X \times X \text{ : } d(x,y)<\epsilon\} \rightarrow X
\end{equation*}
is a continuous map such that $[x,x] = x$ and 
\begin{align*}
[[x,y] , z] & = [x,z], \tag{SS1} \\
[x,[y,z]] & =[x,z], 	\tag{SS2}		\\
f([x,y]) & = [f(x),f(y)],	\tag{SS3}
\end{align*}
when the two sides of these relations are defined. 

Additionally, we require the existence of a constant $0 < \lambda < 1$ such that for any $x \in X$ we have the following two conditions:
For $y,z \in X$ such that $d(x,y),d(x,z) < \epsilon$ and $[y,x] = x = [z,x]$, we have
\begin{equation*}
d(f(y),f(z)) \leq \lambda d(y,z); \tag{SS4}
\end{equation*}
and for $y,z \in X$ such that $d(x,y),d(x,z) < \epsilon$ and $[x,y] = x = [x,z]$, we have
\begin{equation*}
d(f^{-1}(y),f^{-1}(z)) \leq \lambda d(y,z). \tag{SS5}
\end{equation*}

\begin{definition} \label{smaledef}
Let $X$ be a compact metric space with metric $d$. Let $f : X \rightarrow X$ be a homeomorphism and $[\cdot , \cdot]$ have the properties $SS1-SS5$ above. Then we define the  Smale space to be the quadruple $(X, d, f, [\cdot , \cdot])$.
If $f : X \rightarrow X$ is also topological mixing then we call $(X, d, f, [\cdot , \cdot])$ a mixing Smale space.
\end{definition}

The product map given by $[\cdot,\cdot]$ can be used to define local stable and unstable manifolds.

\begin{definition} \label{def:unstable}
For   sufficiently small $\delta > 0$ one can define the stable and unstable manifolds through $x \in X$ by 
    \begin{align*}
        W_\delta^s(x) & = \{y \in X \text{ : } y = [x,y] \text{ and } d(x,y)<\delta\}, \\
        W_\delta^u(x) & = \{y \in X \text{ : } y = [y,x] \text{ and } d(x,y)<\delta\}.
    \end{align*}
\end{definition}

It is easily seen from $SS4$ and $SS5$ that Definition \ref{def:unstable} can be equivalently characterised in terms of the behaviour of forward and backward orbits.

Important examples of Smale spaces include locally maximal uniformly hyperbolic diffeomorphisms and subshifts of finite type. For explanation of these examples in the context of Smale spaces, see \cite{parmenter2024constructing}. For further details about Smale spaces, see \cite{ruelle2004thermodynamic} \textsection 7.

\section{Conditional Gibbs measures}

The conditional Gibbs property is an analogue of the Gibbs property bounding the measure of dynamic Bowen balls restricted to unstable leaves.
\begin{definition} \label{condgibbs}
 For $y \in \textcolor{black}{W_\delta^u(x)}$, $0 < \epsilon < \delta$ and $n \in \mathbb{N}$
we define the  {unstable Bowen ball} of radius $\epsilon$ by
 $$B_{d_u}(y,n,\epsilon) 
 = \{z \in W^u(x) \hbox{ : } d_u(f^iz, f^iy) < \epsilon \hbox{ for } 0 \leq i \leq n-1\}
 $$ 
 where $d_u$ is the induced unstable metric on $W_\delta^u(x)$. 

Let $\mu^u$ be a measure supported on a piece of unstable manifold \textcolor{black}{centred at $x$}. We say that it has the conditional Gibbs property for $G:X \rightarrow \mathbb{R}$ if for every small $\epsilon > 0$ there is a constant $K = K(\epsilon) > 0$ such that, for every $y \in W_\delta^u(x)$ and $n \in \mathbb{N}$ we have,
\begin{equation*}
K^{-1} \leq \frac{\mu^u(B_{d_u}(y, n, \epsilon))}{e^{S_nG(y) - nP(G)}} \leq K.
\end{equation*}
We write $\mu^u = \mu_{G}^u$ if this conditional property holds. We may also write $\mu_{x,G}^u$ when we need to emphasise the measure is supported on a piece of unstable manifold centred at $x$.
\end{definition}


For $G:X \rightarrow \mathbb{R}$ H\"older it can be shown using, for example, \cite{bowen-periodic} and \cite{ruelle2004thermodynamic} that the equilibrium state $\mu_G$ has the local product structure. Intuitively this means that `locally' (restricting to the rectangle $R = [W_\delta^u(x), W_\delta^s(x)]$) $\mu_G$ can be decomposed into measures on unstable and stable leaves, $\mu_G |_R \approx \mu_x^u \otimes \mu_x^s$ and $\mu_x^u$ has the conditional Gibbs property for $G$. For a more detailed discussion see \cite{parmenter2024constructing}.


\section{Proof of Theorem \ref{LLDP}}



Kifer \cite{kifer1990large} shows that uniqueness of equilibrium state for an appropriate set of continuous potentials along with the existence of a characterisation of the topological pressure in terms of a growth estimate, 
\begin{equation*}
    P(\phi) = \lim_{n \rightarrow \infty} \frac{1}{n} \log \int e^{S_n \phi(y)} d \lambda(y),
\end{equation*}
is enough to establish large deviation principles. 
The proof of Theorem \ref{LLDP} relies on Theorem 2.1 \cite{kifer1990large} along with the growth estimate in Lemma 4.1 \cite{parmenter2024constructing}.


\begin{proof}[Proof of Theorem \ref{LLDP}]
Lemma 4.1 \cite{parmenter2024constructing} proves that for any continuous $\varphi : X \rightarrow \mathbb{R}$,
\begin{equation*}
P(\varphi) = \lim_{n \rightarrow \infty} \frac{1}{n} \log \int_{W_\delta^u(x)} e^{S_n(\varphi - G)(y) + nP(G)} d\mu_{x,G}^u(y).
\end{equation*}
Defining $Q : C(X) \rightarrow \mathbb{R}$ by,
\begin{align*}
Q(\varphi) & = \lim_{n \rightarrow \infty} \frac{1}{n} \log \int_{W_\delta^u(x)} e^{S_n\varphi(y)} d\mu_{x,G}^u(y).
\end{align*}
Then by Lemma 4.1 \cite{parmenter2024constructing},
\begin{align*}
	Q(\varphi) = P(\varphi + G) - P(G).
\end{align*}
Using an extension of the variational principle (\cite{kifer1990large}, Theorem 3.1),
\begin{equation} 
Q(\varphi) = \sup_{\mu \in \mathcal{M}_f(X)} \bigg(\int (\varphi + G) d\mu + h_\mu(f) - P(G) \bigg),
\end{equation}
or $Q(\varphi) = - \infty$ if $\mathcal{M}_f(X) = \emptyset$.

The entropy function is affine and upper semi-continuous, see \cite{misiurewicz1976topological}. 
Therefore, by the duality theorem (\cite{aubin2006applied}, pp 201) the rate function in (\ref{rate func}) is convex conjugate to $Q(\varphi)$. Therefore, using Theorem 2.1 \cite{kifer1990large},
\begin{equation*}
\limsup_{n \rightarrow \infty} \frac{1}{n} \log \mu_{x,G}^u(\{y : \zeta_y^n \in K\}) \leq - \inf  \{ I(\nu) \text{ $: $ $\nu \in K$}\}.
\end{equation*}
The lower bound follows similarly using the second half of Theorem 2.1 \cite{kifer1990large}.
\end{proof}


We finish with the following remarks about other systems that can be considered using the methods in the proof of Theorem \ref{LLDP}.

\begin{remark}
    Taking $G=\varphi^{geo}$ we have $\mu_{x,G}^u = Vol_{W_\delta^u(x)}$. By Proposition 3.3 
    \cite{parmenter2022constructing} we can deduce analogous local large deviation principles for partially hyperbolic diffeomorphisms satisfying growth conditions in the centre-stable manifolds. 
\end{remark}

\begin{remark}
    We end with a remark that Theorem 2.1 \cite{kifer1990large} also considers the continuous time setting. 
    Moreover, it is clear we can use Lemma 6.5 \cite{PaPo} along with Theorem 2.1 \cite{kifer1990large} to prove a local large deviation principle analogous to Theorem \ref{LLDP} for hyperbolic flows.  
\end{remark}

\bibliographystyle{abbrv}


\begin{thebibliography}{111}

\bibitem{aubin2006applied}
J.-P. Aubin and I. Ekeland. {\it Applied nonlinear analysis}. Courier Corporation, 2006.

\bibitem{bowen-periodic}
R. Bowen. Periodic points and measures for {A}xiom {A} diffeomorphisms. Trans. Amer. Math. Soc, 154:377-397, 1971.

\bibitem{kifer1990large}
Y. Kifer. Large deviations in dynamical systems and stochastic processes. Trans. Amer. Math. Soc, 321(2):505-524, 1990. 

\bibitem{misiurewicz1976topological}
M. Misiurewicz. Topological conditional entropy. {Studia Mathematica}, 55(2):175-200, 1976.

\bibitem{PaPo}
D. Parmenter and M. Pollicott. Gibbs measures for  hyperbolic attractors defined  by  densities, Discrete and Continuous Dynamical Systems 42, (2022) 3953-3977.

\bibitem{parmenter2024constructing}
D. Parmenter and M. Pollicott. Constructing equilibrium states for Smale spaces.
arXiv preprint arXiv:2403.04646, 2024.

\bibitem{parmenter2022constructing}
 D. Parmenter and M. Pollicott. Constructing equilibrium states for some partially
hyperbolic attractors via densities. arXiv preprint arXiv:2210.10701, 2024.

\bibitem{ruelle}
D. Ruelle. A measure associated with Axiom A attractors. American Journal of
Mathematics, pages 619–654, 1976.

\bibitem{ruelle2004thermodynamic}
D. Ruelle. {\it Thermodynamic formalism: the mathematical structure of equilibrium statistical mechanics}, Cambridge University Press, 2004.


    
\end{thebibliography}

\end{document}